\setlist[itemize]{leftmargin=25pt}
\setlist[enumerate]{leftmargin=25pt}
\newtheorem{theorem}{Theorem}[section]
\newtheorem{lemma}[theorem]{Lemma}
\newtheorem{prop}[theorem]{Proposition}
\theoremstyle{definition}
\theoremstyle{remark}
\newtheorem{remark}[theorem]{Remark}
\numberwithin{equation}{section}
\let \la=\lambda
\let \e=\varepsilon
\let \a=\alpha
\let \O=\Omega
\let \ga=\gamma
\begin{document}
\title[maximal operator on weighted Morrey spaces]
{A note on the maximal operator on weighted Morrey spaces}

\author[A.K. Lerner]{Andrei K. Lerner}
\address[A.K. Lerner]{Department of Mathematics,
Bar-Ilan University, 5290002 Ramat Gan, Israel}
\email{lernera@math.biu.ac.il}

\thanks{The author was supported by ISF grant no. 1035/21.}

\begin{abstract}
In this paper we consider weighted Morrey spaces ${\mathcal M}_{\la, {\mathcal F}}^p(w)$ adapted to a family of cubes ${\mathcal F}$, with norm
$$\|f\|_{{\mathcal M}_{\la, {\mathcal F}}^p(w)}:=\sup_{Q\in {\mathcal F}}\left(\frac{1}{|Q|^{\la}}\int_Q|f|^pw\right)^{1/p},$$
and the question we deal with is whether a Muckenhoupt-type condition characterizes the boundedness of the Hardy--Littlewood maximal operator on ${\mathcal M}_{\la, {\mathcal F}}^p(w)$.

In the case of the global Morrey spaces (when ${\mathcal F}$ is the family of all cubes in ${\mathbb R}^n$) this question is still open. In the case of the local Morrey spaces (when ${\mathcal F}$ is the family
of all cubes centered at the origin) this question was answered positively in a recent work of Duoandikoetxea--Rosenthal \cite{DR21}.

We obtain an extension of \cite{DR21} by showing that the answer is positive
when ${\mathcal F}$ is the family of all cubes centered at a sequence of points in ${\mathbb R}^n$ satisfying a certain lacunary-type condition.
\end{abstract}

\keywords{The maximal operator, weighted Morrey spaces.}

\subjclass[2020]{42B20, 42B25}

\maketitle

\section{Introduction}
Denote by ${\mathcal Q}$ the family of all cubes in ${\mathbb R}^n$ with sides parallel to the axes. Given a cube $Q\in {\mathcal Q}$, denote $\langle f\rangle_Q:=\frac{1}{|Q|}\int_Qf$.
Recall that the Hardy--Littlewood maximal operator $M$ is defined by
$$Mf(x):=\sup_{Q\ni x}\langle |f|\rangle_Q,$$
where the supremum is taken over all cubes $Q\in {\mathcal Q}$ containing the point~$x$.

Let $X$ be a Banach function space over ${\mathbb R}^n$. We say that the $A_X$ condition holds if
there exists $C>0$ such that for every locally integrable $f$ and for all $Q\in {\mathcal Q}$,
$$\langle |f|\rangle_Q\|\chi_Q\|_{X}\le C\|f\chi_Q\|_{X}.$$
Using the language of the associate space $X'$, the $A_X$ condition can be written in the following equivalent form:
there exists $C>0$ such that for all $Q\in {\mathcal Q}$,
$$\frac{\|\chi_Q\|_{X}\|\chi_Q\|_{X'}}{|Q|}\le C.$$

The $A_X$ condition is a trivial necessary condition for the boundedness of the maximal operator $M$ on $X$. Indeed, this follows from the elementary fact that
$\langle |f|\rangle_Q\chi_Q\le M(f\chi_Q)$. Therefore, given a concrete space $X$, the question of interest is whether the $A_X$ condition is sufficient for the boundedness of $M$ on $X$.

Observe that this question usually is a challenging problem, and it does not have an universal answer, in general. For example, in the case where
$X$ is the weighted $L^p$ space, $X=L^p(w),p>1$, then the $A_{L^p(w)}$ condition is the classical Muckenhoupt's $A_p$ condition, and the answer to the above question
is positive \cite{M72}. However, if $X$ is a variable $L^p$ space, $X=L^{p(\cdot)}$, then the $A_{L^{p(\cdot)}}$ condition is not sufficient for the boundedness of $M$ on $L^{p(\cdot)}$, in general
(see, e.g., \cite[Th. 5.3.4]{DHHR11}).

In this paper we consider weighted Morrey spaces of Samko-type ${\mathcal M}_{\la}^p(w)$ with norm
\begin{equation}\label{ms}
\|f\|_{{\mathcal M}_{\la}^p(w)}:=\sup_{Q\in {\mathcal Q}}\left(\frac{1}{|Q|^{\la}}\int_Q|f|^pw\right)^{1/p},
\end{equation}
where $0<\la<1$ and $w$ is a non-negative locally integrable function. Observe that there are many equivalent notations for these spaces. For example, it is more common to use 
$\frac{1}{\ell_Q^{\la}}$ instead of $\frac{1}{|Q|^{\la}}$, where $\ell_Q$ is the side length of $Q$, and correspondingly to assume that $0<\la<n$ instead of $0<\la<1$. We hope that the above notation will not confuse the reader.

The space ${\mathcal M}_{\la}^p(w)$ is usually refereed to as the global Morrey space in contrast to the local Morrey space ${{\mathcal L}\mathcal M}_{\la}^p(w)$ where the supremum over all $Q\in {\mathcal Q}$ in (\ref{ms}) is replaced by the
supremum over all cubes $Q$ centered at the origin. More generally, one can consider Morrey spaces ${\mathcal M}_{\la, {\mathcal F}}^p(w)$ adapted to a given family of cubes ${\mathcal F}\subset {\mathcal Q}$,
with norm
$$\|f\|_{{\mathcal M}_{\la, {\mathcal F}}^p(w)}:=\sup_{Q\in {\mathcal F}}\left(\frac{1}{|Q|^{\la}}\int_Q|f|^pw\right)^{1/p}.$$

To the best of our knowledge, the question whether the $A_{{\mathcal M}_{\la}^p(w)}$ condition is sufficient for the boundedness of $M$ on ${\mathcal M}_{\la}^p(w)$ is still open.
On the other hand, for the local Morrey spaces it has been recently established by Duoandikoetxea--Rosenthal \cite{DR21}, simplifying the previous characterization by Nakamura--Sawano--Tanaka \cite{NST20},
that the $A_{{{\mathcal L}\mathcal M}_{\la}^p(w)}$ condition is necessary and sufficient for the boundedness of~$M$ on ${{\mathcal L}\mathcal M}_{\la}^p(w)$.

Our main result is an extension of the Duoandikoetxea--Rosenthal result to more general families of cubes ${\mathcal F}$, namely, we will prove the following.

\begin{theorem}\label{MR} Suppose that $\nu>1$. Let $\Lambda:=\{x_j\}$ be a sequence of points in ${\mathbb R}^n$ such that
\begin{equation}\label{rcond}
\max\big(|x_i|,|x_j|\big)\le \nu|x_i-x_j|\quad(i\not=j).
\end{equation}
Denote by ${\mathcal F}$ the family of all cubes centered at points from $\Lambda$. Then the maximal operator $M$ is bounded on
${\mathcal M}_{\la, {\mathcal F}}^p(w), p>1,$ if and only if the
$A_{{\mathcal M}_{\la, {\mathcal F}}^p(w)}$ condition holds.
\end{theorem}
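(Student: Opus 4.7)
Necessity is immediate from the pointwise bound $\langle|f|\rangle_Q\chi_Q\le M(f\chi_Q)$ already noted in the introduction. For sufficiency, I would fix an arbitrary cube $R\in\mathcal{F}$ centered at some $x_j\in\Lambda$, decompose $f=f\chi_{2R}+f\chi_{(2R)^c}$, and treat the two resulting pieces of $Mf$ separately; the target inequality is
\[
\frac{1}{|R|^\la}\int_R(Mf)^pw\lesssim\|f\|_{\mathcal{M}^p_{\la,\mathcal{F}}(w)}^p.
\]

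For the tail I would first prove a short geometric lemma: if $x\in R$ and $Q\in\mathcal{Q}$ satisfies $x\in Q$ and $Q\cap(2R)^c\ne\emptyset$, then $\ell_Q\ge\ell_R/2$, and consequently $Q$ is contained in a cube $Q'\in\mathcal{F}$ centered at $x_j$ (with half-side $\ell_Q+\ell_R/2$) which also contains $R$ and satisfies $|Q'|\le 4^n|Q|$. This yields the pointwise estimate on $R$
\[
M(f\chi_{(2R)^c})(x)\lesssim\sup_{\substack{Q'\in\mathcal{F}\\ Q'\supset R,\ Q'\text{ centered at }x_j}}\langle|f|\rangle_{Q'}.
\]
Applying the $A_X$ condition to each such $Q'$, together with the monotonicity $\|\chi_R\|_X\le\|\chi_{Q'}\|_X$ (from $R\subset Q'$) and the obvious $\|f\chi_{Q'}\|_X\le\|f\|_X$, I obtain $\langle|f|\rangle_{Q'}\lesssim\|f\|_X/\|\chi_R\|_X$. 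Combined with the trivial lower bound $w(R)/|R|^\la\le\|\chi_R\|_X^p$ (coming from $R\in\mathcal{F}$), the tail estimate $\frac{1}{|R|^\la}\int_R M(f\chi_{(2R)^c})^pw\lesssim\|f\|_X^p$ follows. I note that the lacunary hypothesis (\ref{rcond}) plays no role in this step.

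For the local piece $M(f\chi_{2R})$ I would follow the Duoandikoetxea--Rosenthal strategy and extract from the $A_X$ condition an effective Muckenhoupt $A_p$ condition on $w$ restricted to cubes contained in $2R$. The classical Muckenhoupt theorem would then yield $\int M(f\chi_{2R})^pw\lesssim\int_{2R}|f|^pw\le\|f\|_X^p|2R|^\la\lesssim\|f\|_X^p|R|^\la$, as required. The nontrivial step is producing this localized $A_p$ condition: one must compare $\|\chi_Q\|_X$ and the associate space norm $\|\chi_Q\|_{X'}$, for $Q\subset 2R$, with natural quantities attached to $2R\in\mathcal{F}$, and the lacunary hypothesis (\ref{rcond}) is what prevents cubes in $\mathcal{F}$ centered at other $x_k\in\Lambda$ from polluting these norms at small scales.

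The main obstacle, then, is the local step, and specifically the conversion of the global $A_X$ condition into a localized Muckenhoupt condition on $w$. The tail estimate is essentially geometric and quite clean; the delicate task is to use (\ref{rcond}) to control the interaction between $2R$ and the other centers $x_k$, so that the two-sided bound $\|\chi_Q\|_X\|\chi_Q\|_{X'}\lesssim|Q|$ translates, for $Q\subset 2R$, into the classical Muckenhoupt inequality on $w$. Making this rigorous in the multi-center setting is, to my mind, the core new technical content of the argument and the place where the restriction $\nu>1$ is actually consumed.
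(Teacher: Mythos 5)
Your tail estimate is fine (and, as you note, does not use \eqref{rcond}); it is essentially the same averaging trick the paper uses for cubes lying close to the centers. The genuine gap is in the local step, and it is not merely an omitted computation: the claimed implication ``the $A_{X}$ condition yields a Muckenhoupt $A_p$ condition on $w$ for all cubes contained in $2R$, after which the classical Muckenhoupt theorem applies'' is false in general. Already in the special case $\Lambda=\{0\}$ (the local Morrey space, which trivially satisfies \eqref{rcond}), the weights characterized by the $A_{X}$ condition form a class strictly larger than any local $A_p$ class on cubes meeting the center: for instance power weights $|x|^{\alpha}$ with $n(p-1)\le \alpha<n(p-1)+\la n$ satisfy the $A_{X}$ condition and $M$ is bounded on ${\mathcal L}{\mathcal M}^p_{\la}(w)$ (this is contained in the Duoandikoetxea--Rosenthal characterization), yet $w^{-1/(p-1)}$ is not even locally integrable at the origin, so no cube containing $0$ (and $2R$ always contains such cubes, as well as possibly many other points $x_k\in\Lambda$ with the same pathology) can carry a classical $A_p$ estimate, and $\int (M(f\chi_{2R}))^p w\lesssim\int_{2R}|f|^pw$ fails as a consequence of Muckenhoupt's theorem. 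Technically, the obstruction is visible in your own reduction: testing $A_X$ on $f=w^{-1/(p-1)}\chi_Q$ produces on the right-hand side $\sup_{L\in{\mathcal F}}\bigl(|L|^{-\la}\sigma(L\cap Q)\bigr)^{1/p}$, and small cubes $L$ centered at points of $\Lambda$ near $Q$ make this supremum much larger than $\bigl(\sigma(Q)/|R|^{\la}\bigr)^{1/p}$, so no two-sided $A_p$ bound on $Q$ comes out.

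The fix requires a different splitting than $f\chi_{2R}+f\chi_{(2R)^c}$. In the paper, cubes are classified by the ratio $\mathrm{dist}(\cdot,\Lambda)/\mathrm{diam}(\cdot)$: cubes close to $\Lambda$ at their own scale are handled exactly by your tail-type argument (the paper's family ${\mathscr G}_1$, estimate \eqref{1part}), and only cubes far from $\Lambda$ relative to their size are shown to satisfy a restricted $A_p$ condition \eqref{apeq}; even there one invokes the boundedness of the maximal operator restricted to a subfamily of a dyadic lattice (Proposition \ref{dmop}, due to Jawerth) rather than the full Muckenhoupt theorem. The lacunary hypothesis \eqref{rcond} is consumed not in producing a classical $A_p$ condition inside $2R$, but in Lemma \ref{connect}: it gives the separation property \eqref{kp}, which (via the annular covering of Lemma \ref{redw}) lets one replace the multi-center family ${\mathcal F}$ by a Whitney-type family ${\mathcal W}_{r_1,r_2}$ relative to the closed set $\Omega=\Lambda$, to which Theorem \ref{mr} applies. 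Without some mechanism of this kind that avoids cubes containing points of $\Lambda$ in the $A_p$ step, your outline cannot be completed.
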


\begin{remark}\label{ex}
In the one-dimensional case, a typical example of the sequence satisfying (\ref{rcond}) is the lacunary sequence $\{0,\pm \ga^j\}_{j\in {\mathbb Z}},$ where $\ga:=\frac{\nu}{\nu-1}$.
If $n\ge 2$, for every $j\in {\mathbb Z}$, there exists a finite number $N=N(\nu,n)$ of points $\xi_{l,j}$ on the sphere $\{y:|y|=\ga^j\}$
such that
$$\ga^j\le \nu|\xi_{l,j}-\xi_{m,j}|\quad(l\not=m).$$
Then the sequence $\{0,\xi_{l,j}\}_{j\in {\mathbb Z},\, l=1,\dots, N(\nu,n)}$ will satisfy (\ref{rcond}).
\end{remark}

The proof in Duoandikoetxea--Rosenthal \cite{DR21} is based essentially on the notion of local $A_p$ weights developed in \cite{LS10}. It seems that this
approach has some limitations, and, in particular, it is not clear how it can be directly extended to more
general families of cubes. We use instead the notion of dyadic $A_p$ weights, which seems to be more flexible. Also our key ingredient is the following.

\begin{theorem}\label{mr}
Let $\Omega\subset {\mathbb R}^n$ be an arbitrary nonempty closed set.
Given $0<r_1<r_2<\infty$, define
\begin{equation}\label{W}
{\mathcal W}_{r_1,r_2}:=\{Q\in{\mathcal Q}:r_1\,{\rm{diam}}\,Q\le {\rm{dist}}(Q,\O)\le r_2\,{\rm{diam}}\,Q\}.
\end{equation}
Then the maximal operator $M$ is bounded on $M_{\la,{\mathcal W}_{r_1,r_2}}^p(w),p>1,$ if and only if the $A_{M_{\la,{\mathcal W}_{r_1,r_2}}^p(w)}$ condition holds.
\end{theorem}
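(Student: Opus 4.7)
Necessity is immediate from $\chi_Q\le M\chi_Q$, so the plan focuses on sufficiency. My approach will combine a dyadic reduction with a long-range/short-range split of the maximal operator on each Whitney cube, following the ``dyadic $A_p$'' philosophy advertised in the introduction.

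Write $X:=\mathcal{M}_{\lambda,\mathcal{W}_{r_1,r_2}}^p(w)$. First I would apply the three-lattice theorem to dominate $Mf\lesssim\sum_{k=1}^K M^{\mathcal{D}^{(k)}}f$, reducing matters to bounding a single dyadic maximal operator $M^{\mathcal{D}}$. For $R\in\mathcal{F}$, I would then pick a dyadic $\widetilde R\in\mathcal{D}$ with $R\subset\widetilde R$ and $|\widetilde R|\le c_n|R|$. For $x\in R$ and $Q\in\mathcal{D}$ with $x\in Q$, dyadic nesting forces either $Q\subseteq\widetilde R$ or $Q\supsetneq\widetilde R\supset R$, splitting the problem into the two bounds
\[\int_R\Bigl(\sup_{Q\supsetneq\widetilde R}\langle|f|\rangle_Q\Bigr)^p w\le C\|f\|_X^p|R|^\lambda\quad\text{and}\quad\int_R\bigl(M^{\mathcal{D}(\widetilde R)}(f\chi_{\widetilde R})\bigr)^pw\le C\|f\|_X^p|R|^\lambda.\]

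The long-range bound is easy. Any $Q\supsetneq\widetilde R$ contains $R$, so monotonicity gives $\|\chi_Q\|_X\ge\|\chi_R\|_X\ge(w(R)/|R|^\lambda)^{1/p}$, the second inequality by taking $Q'=R$ in the Morrey sup. The $A_X$ hypothesis, which by definition holds on every $Q\in\mathcal{Q}$, applied to $Q$ gives $\langle|f|\rangle_Q\le C\|f\chi_Q\|_X/\|\chi_Q\|_X\le C\|f\|_X(|R|^\lambda/w(R))^{1/p}$; integrating over $R$ against $w$ closes this half.

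The short-range bound is the main obstacle. I would show that $w$ is a dyadic $A_p$ weight on $\mathcal{D}(\widetilde R)$; once this holds, the standard dyadic Muckenhoupt theorem yields $\int_{\widetilde R}(M^{\mathcal{D}(\widetilde R)}(f\chi_{\widetilde R}))^pw\le C\int_{\widetilde R}|f|^pw$, and an $O_n(1)$-cover of $\widetilde R\setminus R$ by cubes in $\mathcal{F}$ of size $\sim|R|$ (after a harmless enlargement of the Whitney window if needed) gives $\int_{\widetilde R}|f|^pw\le C\|f\|_X^p|R|^\lambda$. The key input will be a Whitney comparability lemma: every $Q'\in\mathcal{F}$ meeting $\widetilde R$ satisfies $\operatorname{diam}(Q')\sim\operatorname{diam}(R)$. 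The argument is that any $p\in Q'\cap\widetilde R$ has $\operatorname{dist}(p,\Omega)\sim\operatorname{diam}(R)$ by the two-sided Whitney bounds on $R$, and coupling this with $r_1\operatorname{diam}(Q')\le\operatorname{dist}(Q',\Omega)\le r_2\operatorname{diam}(Q')$ forces $\operatorname{diam}(Q')\sim\operatorname{diam}(R)$. The comparability then yields, for every dyadic $Q\in\mathcal{D}(\widetilde R)$, the equivalences $\|\chi_Q\|_X\sim w(Q)^{1/p}/|R|^{\lambda/p}$ and $\|\sigma\chi_Q\|_X\sim\sigma(Q)^{1/p}/|R|^{\lambda/p}$ with $\sigma:=w^{1-p'}$. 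Testing the $A_X$ inequality against $\sigma\chi_Q$ and substituting reduces $\langle\sigma\rangle_Q\|\chi_Q\|_X\le C\|\sigma\chi_Q\|_X$ to $\sigma(Q)^{p-1}w(Q)\le C^p|Q|^p$, the $A_p$ condition on $Q$. The hardest part is the Whitney comparability lemma together with the two-sided evaluation of $\|\chi_Q\|_X$ and $\|\sigma\chi_Q\|_X$; both sides of (\ref{W}) enter essentially there, and this is precisely what lets one extract dyadic $A_p$ from $A_X$.
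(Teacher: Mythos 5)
Your overall strategy (reduce to a dyadic maximal operator, handle ``big'' cubes by the $A_X$ testing condition, and handle ``small'' cubes by extracting a dyadic $A_p$ condition from $A_X$ and invoking the restricted dyadic Muckenhoupt theorem) is the same philosophy as the paper's, and your long-range estimate is essentially the paper's bound for the family $\mathscr{G}_1$. However, there are two genuine gaps. First, after fixing one lattice $\mathscr{D}$ you cannot in general pick $\widetilde R\in\mathscr{D}$ with $R\subset\widetilde R$ and $|\widetilde R|\le c_n|R|$: the three-lattice theorem guarantees such an envelope only in \emph{one} of the $3^n$ lattices, while your reduction requires the estimate for every pair (Whitney cube $R$, lattice $\mathscr{D}_j$). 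This is repairable (e.g.\ cover $R$ by at most $2^n$ dyadic cubes of comparable generation and run the argument on each), but as written the decomposition does not exist.

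The more serious gap is the ``Whitney comparability lemma,'' which is false without a preliminary normalization of $r_1$. If $r_1$ is small compared with the dilation constant relating $\widetilde R$ to $R$, the envelope $\widetilde R$ may meet or even cross $\Omega$; then only the upper bound $\operatorname{dist}(p,\Omega)\lesssim\operatorname{diam}R$ holds for $p\in\widetilde R$, arbitrarily small cubes of $\mathcal{W}_{r_1,r_2}$ can intersect $\widetilde R$, and both two-sided evaluations $\|\chi_Q\|_X\sim w(Q)^{1/p}|R|^{-\lambda/p}$ and $\|\sigma\chi_Q\|_X\sim\sigma(Q)^{1/p}|R|^{-\lambda/p}$ break down (the first needs a window cube of size $\lesssim|R|$ containing $Q$, which fails when $\operatorname{dist}(Q,\Omega)<r_1\operatorname{diam}Q$; the second fails because tiny window cubes $L$ make $|L|^{-\lambda}$ blow up). Consequently the claimed dyadic $A_p$ property on all of $\mathcal{D}(\widetilde R)$ is not established. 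Likewise, the ``harmless enlargement of the Whitney window'' in the covering step is not harmless: subcubes of $\widetilde R$ of size smaller than $\operatorname{diam}R$ generically violate the upper constraint $\operatorname{dist}\le r_2\operatorname{diam}$ and larger ones the lower constraint, so one must change the window and then prove that the Morrey norms for different windows are comparable --- this is exactly the paper's Lemma \ref{eqst}, which is a real (if elementary) step, not a remark. The paper avoids both problems by first using Lemma \ref{eqst} to assume $1<r_1<r_2$, and then splitting the dyadic cubes according to the ratio $\operatorname{dist}(\cdot,\Omega)/\operatorname{diam}(\cdot)$ (the families $\mathscr{G}_1$, $\mathscr{G}_2$ with carefully tuned constants $\a,\mu,\ga$) rather than by containment in a dyadic envelope; with that normalization your two evaluations become the paper's estimates (\ref{inst}) and (\ref{sint}) and the argument closes. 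To repair your write-up you would need to add the window-renormalization lemma, restrict the $A_p$ extraction to the cubes that are far from $\Omega$ relative to their size (handling the others by the $L^\infty$/testing bound), and fix the envelope construction.
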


Observe that the standard example of the family ${\mathcal W}_{r_1,r_2}$ in Theorem~\ref{mr} is the Whitney covering of $\Omega^c$. Also note that the implicit constants
in this statement depend only on $r_1$ and $r_2$ and do not depend on $\Omega$.

The main connecting element between Theorems \ref{MR} and \ref{mr} is the following lemma, which is based on the same idea as in \cite[Lemma 1.1]{NST20}.

\begin{lemma}\label{connect}
Assume that $\Lambda$ and ${\mathcal F}$ are the same as in Theorem \ref{MR}. Let $\Omega:=\{x_j\in \Lambda\}$. Then there exist $r_1$ and $r_2$ (depending on $\nu$ from (\ref{rcond}))
such that for the family ${\mathcal W}_{r_1,r_2}$ defined by (\ref{W}),
$$\|f\|_{M_{\la,{\mathcal F}}^p(w)}\simeq \|f\|_{M_{\la, {\mathcal W}_{r_1,r_2}}^p(w)}.$$
\end{lemma}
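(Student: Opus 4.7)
The plan is to establish the two inequalities separately, choosing $r_1, r_2$ to be the standard Whitney constants for $\R^n \setminus \Omega$. The easier direction $\|f\|_{{\mathcal M}^p_{\la,{\mathcal W}_{r_1,r_2}}(w)} \lesssim \|f\|_{{\mathcal M}^p_{\la,{\mathcal F}}(w)}$ is immediate: given $Q \in {\mathcal W}_{r_1,r_2}$, the condition $\mathrm{dist}(Q,\Omega) \le r_2\,\mathrm{diam}\,Q$ provides a point $y = x_i \in \Lambda$ within distance $r_2\,\mathrm{diam}\,Q$ of $Q$; enclosing $Q$ in the concentric-at-$y$ cube $Q^* := Q(y, C\,\mathrm{diam}\,Q) \in {\mathcal F}$ for a suitable $C = C(n,r_2)$ gives $Q \subset Q^*$ with $|Q^*| \simeq |Q|$, so $|Q|^{-\la}\int_Q|f|^p w \lesssim \|f\|^p_{{\mathcal M}^p_{\la,{\mathcal F}}(w)}$.

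For the opposite direction I would apply the Whitney decomposition $\{R_m\}$ of the open set $\R^n\setminus\Omega$: by construction $\mathrm{diam}\,R_m \simeq \mathrm{dist}(R_m, \Omega)$, so every $R_m$ belongs to ${\mathcal W}_{r_1,r_2}$. Since $\Omega$ has Lebesgue measure zero, for any $Q_0 = Q(x_j,\ell) \in {\mathcal F}$,
\[
\int_{Q_0} |f|^p w \,=\, \int_{Q_0 \setminus \Omega} |f|^p w \,\le\, \sum_{R_m \cap Q_0 \neq \emptyset} \int_{R_m}|f|^p w \,\le\, \|f\|^p_{{\mathcal M}^p_{\la,{\mathcal W}_{r_1,r_2}}(w)} \sum_{R_m \cap Q_0 \neq \emptyset} |R_m|^\la,
\]
so everything reduces to the geometric estimate $\sum_{R_m \cap Q_0 \neq \emptyset} |R_m|^\la \lesssim \ell^{n\la}$ with implicit constant depending only on $\nu, n, \la$.

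To prove this I would group the Whitney cubes by their nearest $\Omega$-point $y(R_m) \in \Lambda$. A short comparison argument, starting from $\mathrm{dist}(R_m, \Omega) \le \mathrm{dist}(R_m, x_j) \lesssim \ell$, gives $|y(R_m) - x_j| \lesssim \ell$, and then \eqref{rcond} upgrades this to $|y(R_m)| \lesssim \nu \ell$. For each fixed $y = x_i$ there are $O_n(1)$ Whitney cubes having $y$ as nearest point at each dyadic scale $s$, and these scales satisfy $2^s \lesssim \mathrm{dist}(y, \Omega\setminus\{y\}) \lesssim \min(|y|, \ell)$: the upper bound $\lesssim \ell$ comes from taking $y' = x_j$ in $\min_{y' \ne y}|y-y'|$, while the upper bound $\lesssim |y|$ comes from comparing with any $\Omega$-point of norm at most $|y|$, with \eqref{rcond} providing the needed control. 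Thus the contribution of $y$ is bounded by $\min(|y|, \ell)^{n\la}$. Finally \eqref{rcond} forces at most $O(\nu^n)$ points of $\Lambda$ into every dyadic shell $\{|y| \sim 2^\sigma\}$, and summing the resulting geometric series $\sum_{\sigma \le \log_2(\nu \ell)} \nu^n\, \min(2^\sigma, \ell)^{n\la}$ — convergent as $\sigma \to -\infty$ precisely because $\la > 0$ — gives the required bound $\lesssim \ell^{n\la}$.

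The main obstacle is this last combinatorial step. As Remark \ref{ex} illustrates, a cube in ${\mathcal F}$ can contain infinitely many points of $\Lambda$ (e.g.\ a lacunary sequence accumulating at the origin), so the hypothesis \eqref{rcond} must be used twice — once to cap the Whitney scales by $\sim |y|$ at each point, and once to limit the number of points per dyadic shell to $O(\nu^n)$ — for otherwise the sum $\sum|R_m|^\la$ would genuinely diverge.
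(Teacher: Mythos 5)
Your overall route is genuinely different from the paper's (the paper never uses a Whitney covering: it decomposes a cube of ${\mathcal F}$ into annuli around its center, Lemma \ref{redw}, and then runs a case analysis built on the cubes $Q_{\nu,x_j}$ of diameter $|x_j|/\nu$), and parts of your argument are sound: the easy direction, and the reduction of the hard direction to the geometric estimate $\sum_{R_m\cap Q_0\neq\emptyset}|R_m|^{\la}\lesssim \ell^{n\la}$. The genuine gap is in your proof of that estimate, specifically the claim that the scales of Whitney cubes having $y$ as nearest $\Omega$-point satisfy $2^{s}\lesssim \mathrm{dist}(y,\Omega\setminus\{y\})$, hence $2^s\lesssim|y|$. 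This is false: take $\Lambda=\{0\}\cup\{2^{-k}e_1\}_{k\ge 0}$ in $\R^2$ (which satisfies \eqref{rcond} with $\nu=2$) and a Whitney cube of side $h$ with $2^{-k}\ll h\ll 1$ lying at height $\simeq h$ above the segment near $2^{-k}e_1$; its distance to $\Omega$ is $\simeq h\simeq \mathrm{diam}\,R$, and its nearest point can perfectly well be $y=2^{-k}e_1$ (the distances to the various small points and to $0$ are essentially tied), while $\mathrm{dist}(y,\Omega\setminus\{y\})=2^{-k-1}\ll h$ and $|y|\ll h$. There is also no way to derive your inequality from the Whitney property, since being the nearest point only gives $\mathrm{dist}(R,y)\le\mathrm{dist}(R,y')$, which yields no lower bound on $|y-y'|$. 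Consequently the per-point bound $\min(|y|,\ell)^{n\la}$ on the contribution of each $y$ is unjustified (and, with an adversarial choice of nearest point in the ties above, false: a single point of tiny norm can be assigned $O_n(1)$ cubes at every scale up to $\simeq\ell$), so your final summation over dyadic shells does not control the total.

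The estimate you need is nevertheless true, and your approach can be repaired by counting per scale instead of per point. Whitney cubes of scale $2^s$ meeting $Q_0=Q(x_j,\ell)$ satisfy $2^s\lesssim\ell$ and lie in a $C2^s$-neighbourhood of $\Omega\cap B(x_j,C\ell)$; by \eqref{rcond} the points of $\Lambda$ in that ball of norm $\ge 2^s$ are $2^s/\nu$-separated and have norms $\lesssim\nu\ell$, so there are $O_{\nu,n}(\log(C\ell/2^s))$ of them, while all points of norm $<2^s$ sit in a single ball of radius $2^s$; hence the number of Whitney cubes at scale $2^s$ meeting $Q_0$ is $\lesssim_{\nu,n} 1+\log(\ell/2^s)$, and
$$\sum_{2^s\lesssim\ell}2^{sn\la}\bigl(1+\log(\ell/2^s)\bigr)\lesssim \ell^{n\la}$$
because $\la>0$. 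With this replacement (and with the Whitney decomposition taken for the complement of $\overline{\Omega}$, since $0$ may be a limit point of $\Lambda$ not belonging to it) your proof goes through; as it stands, however, the key combinatorial step fails.
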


It is easy to see that Lemma \ref{connect} and Theorem \ref{mr} immediately imply Theorem \ref{MR}. Indeed, since the $A_X$ condition is defined in terms on the $X$-norm only, by Lemma \ref{connect}
we have that the $A_{M_{\la,{\mathcal F}}^p(w)}$ and $A_{M_{\la, {\mathcal W}_{r_1,r_2}}^p(w)}$ conditions are equivalent. Also by Lemma \ref{connect}, $M$ is bounded on $M_{\la,{\mathcal F}}^p(w)$ iff $M$
is bounded on $M_{\la, {\mathcal W}_{r_1,r_2}}^p(w)$. Hence, in order to obtain Theorem \ref{MR}, it remains to apply Theorem \ref{mr}.

The paper is organized as follows. Section 2 contains brief preliminaries about dyadic lattices and dyadic $A_p$ weights. In Section 3 we prove Theorem \ref{mr} and Lemma \ref{connect}.

\section{Preliminaries}
\subsection{Dyadic lattices}
Given a cube $Q_0\in {\mathcal Q}$, let ${\mathcal D}(Q_0)$ denote the set of all dyadic cubes with respect to $Q_0$, that is, the cubes
obtained by repeated subdivision of $Q_0$ and each of its descendants into $2^n$ congruent subcubes.

Following \cite{LN19} we say that a dyadic lattice ${\mathscr D}$ in ${\mathbb R}^n$ is any collection of cubes such that
\begin{enumerate}
\renewcommand{\labelenumi}{(\roman{enumi})}
\item
if $Q\in{\mathscr D}$, then each child of $Q$ is in ${\mathscr D}$ as well;
\item
every 2 cubes $Q',Q''\in {\mathscr D}$ have a common ancestor, i.e., there exists $Q\in{\mathscr D}$ such that $Q',Q''\in {\mathcal D}(Q)$;
\item
for every compact set $K\subset {\mathbb R}^n$, there exists a cube $Q\in {\mathscr D}$ containing $K$.
\end{enumerate}

Given a dyadic lattice ${\mathscr D}$, the dyadic maximal operator $M^{\mathscr D}$ is defined by
$$M^{\mathscr D}f(x):=\sup_{Q\in {\mathscr D}: x\in Q}\langle|f|\rangle_Q.$$

It is well-known that there exist $3^n$ dyadic lattices ${\mathscr D}_j$ such that for all $x\in {\mathbb R}^n$,
\begin{equation}\label{three}
Mf(x)\le 3^n\sum_{j=1}^{3^n}M^{{\mathscr D}_j}f(x).
\end{equation}
This follows from the fact that every cube in ${\mathbb R}^n$ can be well approximated by the cubes from ${\mathscr D}_j$ (see, e.g., \cite[Lemma 2.5]{HLP13} or \cite[Th. 3.1]{LN19}).

\subsection{Dyadic $A_p$ weights}
Fix a dyadic lattice ${\mathscr D}$. Assume that ${\mathcal F}\subset~{\mathscr D}$. We say that a weight $w\in A_{p, {\mathcal F}}$ if
$$[w]_{A_{p, {\mathcal F}}}:=\sup_{Q\in {\mathcal F}}\left(\frac{1}{|Q|}\int_Qw\right)\left(\frac{1}{|Q|}\int_Qw^{-\frac{1}{p-1}}\right)^{p-1}<\infty.$$

Next we consider the dyadic maximal operator $M^{\mathcal F}$ which is defined similarly to $M^{\mathscr D}$ but with the supremum restricted only to the cubes $Q\in {\mathcal F}$.
The following statement can be easily deduced from a general result of Jawerth \cite{J86}, for a simpler proof see \cite{L08}.

\begin{prop}\label{dmop} The dyadic maximal operator $M^{\mathcal F}$ is bounded on $L^p(w), p>1,$ if and only if $w\in A_{p, {\mathcal F}}$.
\end{prop}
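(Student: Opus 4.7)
Let $\sigma := w^{-1/(p-1)}$; the $A_{p,\mathcal F}$ condition then reads $\sigma(Q)^{p-1} w(Q) \le [w]_{A_{p,\mathcal F}} |Q|^p$ for every $Q\in\mathcal F$.

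The \emph{necessity} is a standard testing argument. Fix $Q\in\mathcal F$ and apply the hypothesized inequality $\|M^{\mathcal F}h\|_{L^p(w)}\le C\|h\|_{L^p(w)}$ to $h:=\sigma\chi_Q$ (with a routine truncation if needed to ensure $h\in L^p(w)$). Since $M^{\mathcal F}h\ge\langle\sigma\rangle_Q\chi_Q$ pointwise, one obtains
$$\langle\sigma\rangle_Q^p\,w(Q)\le C^p\int_Q\sigma=C^p|Q|\langle\sigma\rangle_Q,$$
which rearranges to $\langle w\rangle_Q\langle\sigma\rangle_Q^{p-1}\le C^p$, as desired.

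For the \emph{sufficiency}, I would first establish the weak-type $(p,p)$ bound and then upgrade to strong-type. Because $\mathcal F\subset\mathscr D$, any two cubes of $\mathcal F$ are either disjoint or nested, so the level set $\{M^{\mathcal F}f>\lambda\}$ decomposes as a disjoint union of maximal cubes $\{Q_j\}\subset\mathcal F$ with $\langle|f|\rangle_{Q_j}>\lambda$. Hölder's inequality with exponents $p,p'$ (using $w^{-p'/p}=\sigma$) gives
$$\langle|f|\rangle_{Q_j}^p\le\frac{\sigma(Q_j)^{p-1}}{|Q_j|^p}\int_{Q_j}|f|^pw.$$
Multiplying by $w(Q_j)$ and invoking the $A_{p,\mathcal F}$ condition yields $\lambda^p w(Q_j)\le[w]_{A_{p,\mathcal F}}\int_{Q_j}|f|^pw$; summing over the disjoint $Q_j$ produces
$$w(\{M^{\mathcal F}f>\lambda\})\le\lambda^{-p}[w]_{A_{p,\mathcal F}}\|f\|_{L^p(w)}^p.$$

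To upgrade to the strong estimate, I would invoke the open property: if $w\in A_{p,\mathcal F}$ then $w\in A_{p-\varepsilon,\mathcal F}$ for some $\varepsilon>0$. The weak-type step applied at index $p-\varepsilon$ yields weak-type $(p-\varepsilon,p-\varepsilon)$, and Marcinkiewicz interpolation with the trivial strong-type $(\infty,\infty)$ bound $\|M^{\mathcal F}f\|_\infty\le\|f\|_\infty$ then gives the strong-type $(p,p)$ inequality. The \textbf{main obstacle} is the open property itself: for the full dyadic lattice it is a consequence of the dyadic reverse Hölder inequality for $\sigma$, proved by a Calderón--Zygmund stopping-cube construction, but for a subfamily $\mathcal F\subset\mathscr D$ one must check that the stopping procedure can be localized so that only cubes of $\mathcal F$ are used in the $A_p$ testing. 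This combinatorial localization is implicit in Jawerth's general framework \cite{J86}; the elementary dyadic proof in \cite{L08} handles it directly and is the route I would follow.
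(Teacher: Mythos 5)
Your necessity argument and the weak-type $(p,p)$ estimate are fine (the extraction of maximal cubes requires the usual limiting argument over finite subfamilies, but that is routine). The genuine gap is exactly the step you flag as the ``main obstacle'': the open property $w\in A_{p,\mathcal F}\Rightarrow w\in A_{p-\varepsilon,\mathcal F}$ for a general subfamily $\mathcal F\subset\mathscr D$ is not a localization issue to be checked --- it is false, so no stopping-cube bookkeeping can produce it. Already for $\mathcal F=\{Q_0\}$ a single cube it fails: on $Q_0=(0,1)$ with $p=2$ take $\sigma(x)=x^{-1}\log^{-2}(e/x)$ and $w=\sigma^{-1}$; then $w,\sigma\in L^1(Q_0)$, so $[w]_{A_{2,\mathcal F}}<\infty$, while $w^{-1/(q-1)}=\sigma^{1/(q-1)}$ is non-integrable for every $q<2$, so $[w]_{A_{q,\mathcal F}}=\infty$ for all $q<2$ (the example extends trivially to $\mathbb R^n$). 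Nevertheless $M^{\mathcal F}$ \emph{is} bounded on $L^2(w)$ in this example, since for a single cube boundedness is precisely the testing condition via H\"older --- which shows both that the proposition is fine and that it cannot be reached through self-improvement. The reverse H\"older inequality genuinely needs the $A_p$ testing on the stopping cubes generated inside a cube of $\mathcal F$, and these need not belong to $\mathcal F$. Since Marcinkiewicz interpolation of weak $(p,p)$ with $L^\infty$ only yields strong $(q,q)$ for $q>p$, your scheme cannot close at the exponent $p$ itself, and the appeal to \cite{J86} and \cite{L08} at that point is circular: those papers prove the proposition directly, not the self-improvement (which, as above, they could not).

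For comparison: the paper offers no proof of its own, citing Jawerth \cite{J86} and the elementary argument of \cite{L08}, and both routes avoid openness entirely by proving the strong bound directly in the dual-weight form $\|M^{\mathcal F}(g\sigma)\|_{L^p(w)}\lesssim \|g\|_{L^p(\sigma)}$. The point is that for $R\in\mathcal F$ and $x\in R$ one has $\langle |g|\sigma\rangle_R\le \frac{\sigma(R)}{|R|}\,M^{\mathcal F}_{\sigma}g(x)$, the factor $\frac{\sigma(R)}{|R|}$ is controlled by the $A_{p,\mathcal F}$ condition (so only cubes of $\mathcal F$ are ever tested), and one then invokes the universal boundedness of the weighted dyadic maximal operator $M^{\mathscr D}_{\mu}$ on $L^s(\mu)$, $s>1$, valid for every locally finite measure $\mu$ with constant independent of $\mu$; since $M^{\mathcal F}_\sigma\le M^{\mathscr D}_\sigma$, this gives the strong-type bound with constant depending only on $[w]_{A_{p,\mathcal F}}$, $p$ and $n$. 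Your testing computation and weak-type step can be kept as a warm-up, but the strong-type inequality must be obtained by this (or Jawerth's) direct route rather than by interpolation through a nonexistent $A_{p-\varepsilon,\mathcal F}$ property.
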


\section{Proofs of Theorem \ref{mr} and Lemma \ref{connect}}
We start with the following auxiliary statement. Recall that given a nonempty compact set $\O\subset {\mathbb R}^n$, the family ${\mathcal W}_{r_1,r_2}$ is defined by
$$
{\mathcal W}_{r_1,r_2}:=\{Q\in{\mathcal Q}:r_1\,{\rm{diam}}\,Q\le {\rm{dist}}(Q,\O)\le r_2\,{\rm{diam}}\,Q\}.
$$

\begin{lemma}\label{eqst} Given $0<r_1<r_2<\infty$, there exist $1<\a_1<\a_2<\infty$ such that
$$\|f\|_{M^p_{\la,{\mathcal W}_{r_1,r_2}(w)}}\simeq \|f\|_{M^p_{\la,{\mathcal W}_{\a_1,\a_2}(w)}}.$$
\end{lemma}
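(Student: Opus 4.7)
The strategy is to choose $\alpha_1,\alpha_2$ explicitly in terms of $r_1,r_2$ and verify both directions of the norm equivalence by direct geometric arguments. Take $\alpha_1:=r_2+1$, let $k_0$ be the least positive integer with $2^{k_0}\ge\alpha_1/r_1$, and set $\alpha_2:=(r_2+1)2^{k_0}$; these satisfy $1<\alpha_1<\alpha_2$.

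\emph{The bound $\|f\|_{\mathcal{M}_{\la,\mathcal{W}_{r_1,r_2}}^p(w)}\lesssim\|f\|_{\mathcal{M}_{\la,\mathcal{W}_{\alpha_1,\alpha_2}}^p(w)}$.} Given $Q_0\in\mathcal{W}_{r_1,r_2}$, partition it into its $2^{nk_0}$ dyadic descendants $P$ of generation $k_0$. Since $P\subseteq Q_0$,
$$r_1\,2^{k_0}\,\mathrm{diam}\,P = r_1\,\mathrm{diam}\,Q_0\le\mathrm{dist}(P,\Omega)\le\mathrm{dist}(Q_0,\Omega)+\mathrm{diam}\,Q_0\le(r_2+1)\,2^{k_0}\,\mathrm{diam}\,P,$$
so by the choice of $k_0,\alpha_2$ each $P\in\mathcal{W}_{\alpha_1,\alpha_2}$. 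Writing $\int_{Q_0}|f|^p w=\sum_P\int_P|f|^p w$ and pulling out the factor $(|P|/|Q_0|)^\la=2^{-nk_0\la}$ gives the inequality with constant of order $2^{nk_0(1-\la)/p}$.

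\emph{Reverse bound.} Given $Q\in\mathcal{W}_{\alpha_1,\alpha_2}$, expand $Q$ concentrically into $Q_t$ (same center, side length $t\,\ell_Q$, $t\ge 1$). The ratio $\phi(t):=\mathrm{dist}(Q_t,\Omega)/\mathrm{diam}\,Q_t$ is continuous in $t$, satisfies $\phi(1)\ge\alpha_1>r_2$, and tends to $0$ as $Q_t$ eventually absorbs a point of $\Omega$; by the intermediate value theorem there is $t^*\ge 1$ with $\phi(t^*)=r_1$, so $Q_{t^*}\in\mathcal{W}_{r_1,r_2}$. Moreover
$$r_1 t^*\,\mathrm{diam}\,Q=\mathrm{dist}(Q_{t^*},\Omega)\le\mathrm{dist}(Q,\Omega)\le\alpha_2\,\mathrm{diam}\,Q,$$
yielding $t^*\le\alpha_2/r_1$ and hence $|Q_{t^*}|/|Q|\le(\alpha_2/r_1)^n$. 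Combined with $Q\subseteq Q_{t^*}$ this closes the reverse inequality.

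\emph{Main obstacle.} Nothing deep: the crux is verifying continuity of $t\mapsto\mathrm{dist}(Q_t,\Omega)$ (standard, since $\Omega$ is closed and $Q_t$ varies Hausdorff-continuously), together with respecting the design constraint $\alpha_1>r_2$ in the reverse direction---concentric expansion can only shrink the ratio, so one must start above $r_2$ to reach $[r_1,r_2]$.
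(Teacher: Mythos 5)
Your proof is correct and takes essentially the same route as the paper's: subdivide each $Q\in{\mathcal W}_{r_1,r_2}$ dyadically to push the lower Whitney ratio above $1$ (the paper iterates a single $2^n$-splitting, you do it in one step with explicit $\alpha_1,\alpha_2$), and for the converse enlarge $Q\in{\mathcal W}_{\alpha_1,\alpha_2}$ by continuity until the ratio equals $r_1$, with the same diameter comparison $\mathrm{diam}\,Q_{t^*}\le(\alpha_2/r_1)\,\mathrm{diam}\,Q$. No gaps.
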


\begin{proof}
One can assume that $r_1\le 1$, otherwise there is nothing to prove.
Let $Q\in {\mathcal W}_{r_1,r_2}$. Split $Q$ into $2^n$ congruent subcubes $Q_j$. Then we have
$$2r_1\text{diam}\,Q_j=r_1\text{diam}\,Q\le \text{dist}(Q,\O)\le \text{dist}\,(Q_j,\O).$$
On the other hand,
\begin{eqnarray*}
\text{dist}\,(Q_j,\O)&\le& \text{diam}\,Q+\text{dist}(Q,\O)\\
&\le& (r_2+1)\text{diam}\,Q=2(r_2+1)\text{diam}\,Q_j.
\end{eqnarray*}
Therefore,
\begin{eqnarray*}
\frac{1}{|Q|^{\la}}\int_Q|f|^pw&\le& \sum_{j=1}^{2^n}\frac{2^{-n\la}}{|Q_j|^{\la}}\int_{Q_j}|f|^pw\\
&\le& 2^{n(1-\la)}\|f\|^p_{M^p_{\la,{\mathcal W}_{2r_1,2(r_2+1)}(w)}},
\end{eqnarray*}
which implies
$$\|f\|_{M^p_{\la,{\mathcal W}_{r_1,r_2}(w)}}\le 2^{n(1-\la)/p}\|f\|_{M^p_{\la,{\mathcal W}_{2r_1,2(r_2+1)}(w)}}.$$
Iterating this estimate, we obtain that there exist $1<\a_1<\a_2<\infty$ such that
$$\|f\|_{M^p_{\la,{\mathcal W}_{r_1,r_2}(w)}}\lesssim \|f\|_{M^p_{\la,{\mathcal W}_{\a_1,\a_2}(w)}}$$
with the implicit constant depending on $r_1,\la, p$ and $n$. Observe that $\a_1$  can be taken in the interval $(1,2]$ and $\a_2$ will
depend on $r_1$ and~$r_2$.

It remains to prove the converse estimate. Suppose that $r_1<\a_1$ and $Q\in {\mathcal W}_{\a_1,\a_2}$. Then $\a_1\le \frac{\text{dist}(Q,\O)}{\text{diam}\,Q}$.
Since $\lim_{|Q|\to \infty}\frac{\text{dist}(Q,\O)}{\text{diam}\,Q}=0$, by continuity, there exists a cube $\widetilde Q\supset Q$ such that
$\frac{\text{dist}(\widetilde Q,\O)}{\text{diam}\,\widetilde Q}=r_1$. Therefore, $\widetilde Q\in {\mathcal W}_{r_1,r_2}$. Also,
$$\text{diam}\,\widetilde Q=\frac{1}{r_1}\text{dist}(\widetilde Q,\O)\le \frac{1}{r_1}\text{dist}(Q,\O)\le \frac{\a_2}{r_1}\text{diam}\, Q.$$
From this,
$$\frac{1}{|Q|^{\la}}\int_Q|f|^pw\le \Big(\frac{\a_2}{r_1}\Big)^{\la n}\frac{1}{|\widetilde Q|^{\la}}\int_{\widetilde Q}|f|^pw,$$
which implies
$$\|f\|_{M^p_{\la,{\mathcal W}_{\a_1,\a_2}(w)}}\le (\a_2/r_1)^{\la n/p}\|f\|_{M^p_{\la,{\mathcal W}_{r_1,r_2}(w)}},$$
and therefore, the proof is complete.
\end{proof}

\begin{proof}[Proof of Theorem \ref{mr}]
As discussed in the Introduction, it is enough to prove the sufficiency of the $A_{M_{\la,{\mathcal W}_{r_1,r_2}}^p(w)}$ condition for the boundedness of $M$ on $M_{\la,{\mathcal W}_{r_1,r_2}}^p(w)$.
Further, by (\ref{three}), it suffices to prove the boundedness of the dyadic maximal operator
$M^{\mathscr D}$ for a fixed dyadic lattice~${\mathscr D}$.
Also, by Lemma \ref{eqst}, without loss of generality one can assume that $1<r_1<r_2<\infty$.

Let us split the cubes in ${\mathscr D}$ into two families
$${\mathscr G}_1:=\{R\in {\mathscr D}: \text{dist}(R,\Omega)\le \a\,\text{diam}\,R\},$$
$${\mathscr G}_2:=\{R\in {\mathscr D}: \text{dist}(R,\Omega)>\a\,\text{diam}\,R\},$$
where $\a>0$ is chosen in such a way that $\mu:=2\frac{r_2+1}{\a-1}+1$ will satisfy $\mu<\min(3/2,r_1)$, and, moreover, $r_1<\a$.

Split correspondingly the maximal operator
$$M^{\mathscr D}f=\max(M^{{\mathscr G}_1}f, M^{{\mathscr G}_2}f).$$

Fix a cube $Q\in {\mathcal W}_{r_1,r_2}$, and consider $M^{{\mathscr G}_1}f$ on $Q$.
Suppose that $R\in {\mathscr G}_1$ and $R\cap Q\not=\emptyset$. For every $x\in Q, y\in R$ and $\xi\in \O$,
$$|x-\xi|\le |x-y|+|y-\xi|\le \text{diam}\,Q+\text{diam}\,R+|y-\xi|.$$
From this,
\begin{equation}\label{QR}
\text{dist}(Q,\Omega)\le \text{diam}\,Q+\text{diam}\,R+\text{dist}(R,\Omega),
\end{equation}
and hence
$$\text{diam}\,Q\le\frac{\a+1}{r_1-1}\text{diam}\,R.$$
Therefore, $Q\subset \ga R,$ where $\ga:=2\frac{\a+1}{r_1-1}+1$.

Further,
$$\|\chi_{\ga R}\|_{M_{\la,{\mathcal W}_{r_1,r_2}}^p(w)}\ge \|\chi_{Q}\|_{M_{\la,{\mathcal W}_{r_1,r_2}}^p(w)}\ge \left(\frac{w(Q)}{|Q|^{\la}}\right)^{1/p},$$
which implies
\begin{eqnarray*}
\langle |f|\rangle_R\le \ga^n \langle |f|\rangle_{\ga R}&\le& \frac{\ga^n}{\|\chi_{\ga R}\|_{M_{\la,{\mathcal W}_{r_1,r_2}}^p(w)}}\|f\|_{M_{\la,{\mathcal W}_{r_1,r_2}}^p(w)}\\
&\le& \ga^n\left(\frac{|Q|^{\la}}{w(Q)}\right)^{1/p}\|f\|_{M_{\la,{\mathcal W}_{r_1,r_2}}^p(w)}.
\end{eqnarray*}
Thus, we obtain
\begin{equation}\label{1part}
\|M^{{\mathscr G}_1}f\|_{L^{\infty}(Q)}\le \ga^n\left(\frac{|Q|^{\la}}{w(Q)}\right)^{1/p}\|f\|_{M_{\la,{\mathcal W}_{r_1,r_2}}^p(w)}.
\end{equation}

Turn to the second part, namely consider $\int_Q(M^{{\mathscr G}_2}f)^pw$. Denote
$${\mathscr G}_2(Q):=\{R\in {\mathscr G}_2:R\cap Q\not=\emptyset\}.$$

Let us show that $w\in A_{p,{\mathscr G}_2(Q)}$ or, equivalently, that for any locally integrable $f$,
\begin{equation}\label{apeq}
\langle |f|\rangle_R\lesssim \left(\frac{1}{w(R)}\int_R|f|^pw\right)^{1/p}
\end{equation}
uniformly in $R\in {\mathscr G}_2(Q)$.

Fix an $R\in {\mathscr G}_2(Q)$. Let us consider
$$\|f\chi_R\|_{M_{\la, {\mathcal W}_{r_1,r_2}}^p(w)}=\sup_{L\in {\mathcal W}_{r_1,r_2}}\left(\frac{1}{|L|^{\la}}\int_{L\cap R}|f|^pw\right)^{1/p}.$$
First, observe that, similarly to (\ref{QR}),
$$
\text{dist}(R,\Omega)\le \text{diam}\,R+\text{diam}\,Q+\text{dist}(Q,\Omega),
$$
which implies $\text{diam}\,R\le\frac{r_2+1}{\a-1}\text{diam}\,Q$, and therefore $R\subset \mu Q$.
From this, for every $L\in {\mathcal W}_{r_1,r_2}$ such that $L\cap R\not=\emptyset$,
$$\text{dist}(Q,\Omega)\le \mu\,\text{diam}\,Q+\text{diam}\,L+\text{dist}(L,\Omega),$$
and hence
$$\text{diam}\,Q\le\frac{r_2+1}{r_1-\mu}\text{diam}\,L.$$
Therefore,
\begin{equation}\label{inst}
\|f\chi_R\|_{M_{\la, {\mathcal W}_{r_1,r_2}}^p(w)}\lesssim\left(\frac{1}{|Q|^{\la}}\int_{R}|f|^pw\right)^{1/p}.
\end{equation}

Next, let us consider $\|\chi_R\|_{M_{\la, {\mathcal W}_{r_1,r_2}}^p(w)}$. Since
$$r_1<\a<\frac{\text{dist}(R,\O)}{\text{diam}\,R}$$
and $\frac{\text{dist}(R,\O)}{\text{diam}\,R}\to 0$ as $R$ expands, by continuity there exists a cube $R'$ such that $R\subset R'$ and $\frac{\text{dist}(R',\O)}{\text{diam}\,R'}=r_1$.
Hence, $R'\in {\mathcal W}_{r_1,r_2}$. Also, since $R'\cap Q\not=\emptyset$,
$$r_1\text{diam}\,R'=\text{dist}(R',\Omega)\le \text{diam}\,R'+\text{diam}\,Q+\text{dist}(Q,\O),$$
and hence $\text{diam}\,R'\le\frac{r_2+1}{r_1-1}\text{diam}\,Q$. Therefore,
\begin{equation}\label{sint}
\|\chi_R\|_{M_{\la, {\mathcal W}_{r_1,r_2}}^p(w)}\ge \left(\frac{w(R)}{|R'|^{\la}}\right)^{1/p}\gtrsim \left(\frac{w(R)}{|Q|^{\la}}\right)^{1/p}.
\end{equation}

Combining (\ref{inst}) with (\ref{sint}) and using the $A_{M_{\la,{\mathcal W}_{r_1,r_2}}^p(w)}$ condition, we obtain
$$\langle|f|\rangle_R\left(\frac{w(R)}{|Q|^{\la}}\right)^{1/p}\lesssim \langle|f|\rangle_R\|\chi_R\|_{M_{\la, {\mathcal F}}^p(w)}\lesssim \left(\frac{1}{|Q|^{\la}}\int_{R}|f|^pw\right)^{1/p},$$
which proves (\ref{apeq}). Therefore, by Proposition \ref{dmop},
$$\int_{Q}(M^{{\mathscr G}_2}f)^pw\le \int_{Q}(M^{{\mathscr G}_2(Q)}(f\chi_{\mu Q}))^pw\lesssim \int_{\mu Q}|f|^pw.$$
This estimate along with (\ref{1part}) implies
\begin{eqnarray*}
\left(\frac{1}{|Q|^{\la}}\int_Q(M^{\mathscr D}f)^pw\right)^{1/p}&\le&\left(\frac{1}{|Q|^{\la}}\int_Q(M^{{\mathscr G}_1}f)^pw\right)^{1/p}\\
&+&\left(\frac{1}{|Q|^{\la}}\int_Q(M^{{\mathscr G}_2}f)^pw\right)^{1/p}\\
&\lesssim& \|f\|_{M_{\la,{\mathcal W}_{r_1,r_2}}^p(w)}+\left(\frac{1}{|Q|^{\la}}\int_{\mu Q}|f|^pw\right)^{1/p}.
\end{eqnarray*}

In order to complete the proof, it remains to show that
\begin{equation}\label{itrem}
\left(\frac{1}{|Q|^{\la}}\int_{\mu Q}|f|^pw\right)^{1/p}\lesssim \|f\|_{M_{\la,{\mathcal W}_{r_1,r_2}}^p(w)}.
\end{equation}
First, observe that
\begin{equation}\label{obser}
\text{dist}(Q,\O)\le \frac{\mu-1}{2}\text{diam}\,Q+\text{dist}(\mu Q,\Omega).
\end{equation}
Indeed, the trivial estimates
$$r_1\text{diam}\,Q\le\text{dist}(Q,\Omega)\le \mu\,\text{diam}\,Q+\text{dist}(\mu Q, \Omega)$$
along with $\mu<r_1$ show that $\text{dist}(\mu Q, \Omega)>0$. Therefore, there exist $x_0\in \partial(\mu Q)$ and $\xi_0\in \Omega$ such that
$$\text{dist}(\mu Q, \Omega)=|x_0-\xi_0|.$$
Now take $x'\in \partial Q$ closest to $x_0$. Then using that $\text{dist}(Q,\Omega)\le |x'-\xi_0|$ and $|x'-x_0|\le \frac{\mu-1}{2}\text{diam}\,Q$ we obtain (\ref{obser}).

Divide $\mu Q$ into $2^n$ equal subcubes $P_j, j=1,\dots 2^n$. By (\ref{obser}), for every $j$,
\begin{eqnarray*}
\frac{2}{\mu}\Big(r_1-\frac{\mu-1}{2}\Big)\text{diam}\,P_j&=&\Big(r_1-\frac{\mu-1}{2}\Big)\text{diam}\,Q\\
&\le& \text{dist}(\mu Q,\O)\le \text{dist}(P_j,\Omega).
\end{eqnarray*}

Further, note that $r_1<\frac{2}{\mu}\Big(r_1-\frac{\mu-1}{2}\Big)$. Indeed, this is equivalent to $\frac{\mu-1}{2-\mu}<r_1$, and using that $\mu<3/2$ we have
$\frac{\mu-1}{2-\mu}<1<r_1$. Therefore, the cubes $P_j$ satisfy
$$r_1\text{diam}\,P_j<\text{dist}(P_j,\O).$$
Similarly to the argument we have used to establish (\ref{sint}), for every $j$, there exists the cube $\widetilde P_j\in {\mathcal W}_{r_1,r_2}$ such that $P_j\subset \widetilde P_j$ and
$\text{diam}\widetilde P_j\lesssim \text{diam}\,Q$.
Hence,
$$\frac{1}{|Q|^{\la}}\int_{\nu Q}|f|^pw\lesssim \sum_{j=1}^{2^n}\frac{1}{|\widetilde P_j|^{\la/n}}\int_{\widetilde P_j}|f|^pw\lesssim \|f\|_{M_{\la,{\mathcal W}_{r_1,r_2}}^p(w)}^p,$$
which proves (\ref{itrem}), and therefore, the proof is complete.
\end{proof}

Given a cube $Q$, denote by $c_Q$ and $\ell_Q$ the center and the side length of $Q$, respectively. The following lemma is close to \cite[Lemma 1.1]{NST20}.

\begin{lemma}\label{redw}
For every cube $Q\in {\mathcal Q}$ and $N\in {\mathbb N}$,
$$\frac{1}{|Q|^{\la}}\int_Q|f|^pw\lesssim \sup_{R\in {\mathcal F}_N}\frac{1}{|R|^{\la}}\int_R|f|^pw,$$
where
$${\mathcal F}_N=\{R\subset Q:\frac{N}{\sqrt n}\,{\rm{diam}}\,R\le {\rm{dist}}(R,c_Q)\le N{\rm{diam}}\,R\}$$
and the implicit constant depends on $\la, N$ and $n$.
\end{lemma}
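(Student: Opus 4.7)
\textbf{Proof plan for Lemma \ref{redw}.}
The strategy is to construct a Whitney-style, essentially disjoint family $\{R_j\}\subset\mathcal{F}_N$ whose union covers $Q\setminus\{c_Q\}$, and then to pass from the integral over $Q$ to the supremum on the right-hand side by the trivial estimate
$$
\int_Q|f|^pw=\sum_j\int_{R_j}|f|^pw\le\Big(\sup_{R\in\mathcal{F}_N}\frac{1}{|R|^\lambda}\int_R|f|^pw\Big)\sum_j|R_j|^\lambda,
$$
so the whole game reduces to showing $\sum_j|R_j|^\lambda\lesssim_{N,n,\lambda}|Q|^\lambda$.

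First I would decompose $Q\setminus\{c_Q\}$ into Euclidean dyadic shells
$$
A_k:=\{x\in Q:2^{-k-1}\ell_Q\le|x-c_Q|\le 2^{-k}\ell_Q\},\qquad k\ge 0,
$$
together with finitely many outermost shells covering the corner region $\{x\in Q:|x-c_Q|>\ell_Q/2\}$; after these adjustments the shells tile $Q\setminus\{c_Q\}$. Inside each $A_k$ I tile by axis-parallel cubes of common side length $s_k\sim 2^{-k}\ell_Q/N$, with the proportionality constant chosen so that every tile $R$ lies in $A_k\subset Q$ and satisfies
$$
\tfrac{N}{\sqrt{n}}\,\mathrm{diam}\,R\le\mathrm{dist}(R,c_Q)\le N\,\mathrm{diam}\,R,
$$
placing it in $\mathcal{F}_N$. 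Because an Euclidean shell of radii $\sim 2^{-k}\ell_Q$ has volume $\sim (2^{-k}\ell_Q)^n$ and each tile has volume $s_k^n\sim(2^{-k}\ell_Q)^n/N^n$, the $k$-th shell contains $O(N^n)$ tiles.

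With this decomposition the size sum splits by scale:
$$
\sum_j|R_j|^\lambda\lesssim\sum_{k\ge 0}N^n\cdot\Big(\frac{2^{-k}\ell_Q}{N}\Big)^{n\lambda}=N^{n(1-\lambda)}\ell_Q^{n\lambda}\sum_{k\ge 0}2^{-kn\lambda}\lesssim_{N,n,\lambda}|Q|^\lambda,
$$
where the geometric series converges because $n\lambda>0$ and $\ell_Q^{n\lambda}=|Q|^\lambda$. Substituting into the first display and dividing by $|Q|^\lambda$ gives the claim.

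The main obstacle I expect is the combinatorial packing step: the admissible window $[\tfrac{N}{\sqrt n},N]\cdot\mathrm{diam}\,R$ is only a factor $\sqrt{n}$ wide, so some shells $A_k$ may need to be refined into a dimension-dependent number of thinner sub-annuli before tiling, and tiles clipped by $\partial Q$ may need a slight rearrangement so they remain in $\mathcal{F}_N$. All of this is routine Whitney-type bookkeeping and contributes only constants depending on $N$, $n$, and $\lambda$, which is exactly the dependence permitted by the statement.
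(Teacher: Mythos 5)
Your global mechanism is the right one, and it is in fact the same as the paper's: cover $Q$ up to a null set by countably many cubes of $\mathcal{F}_N$ arranged in geometrically shrinking generations with $O_{N,n}(1)$ cubes per generation, and then conclude from $\sum_j|R_j|^{\lambda}\lesssim_{N,n,\lambda}|Q|^{\lambda}$ via a convergent geometric series; that counting step in your write-up is correct. The gap is in the construction of the covering family. First, a Euclidean shell $A_k$ cannot be tiled, nor even covered up to measure zero, by cubes that ``lie in $A_k$'': a cube contained in $\{|x-c_Q|\ge \rho\}$ is a convex set disjoint from the open ball, so by strict convexity it meets the sphere $\{|x-c_Q|=\rho\}$ in at most one point; hence any finite family of such cubes (and the cubes of the inner shells, which stay inside the closed ball) leaves uncovered a set of \emph{positive} measure in the layer just outside the inner sphere of $A_k$, and your starting identity $\int_Q|f|^pw\le\sum_j\int_{R_j}|f|^pw$ is not justified. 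Second, if instead you let the grid cubes straddle the inner sphere, then for a cube $R$ of side $s$ merely meeting the shell $\{\rho\le|x-c_Q|\le\tau\rho\}$ one only knows $\rho-\sqrt n\,s\le\mathrm{dist}(R,c_Q)\le\tau\rho$, so to force \emph{every} grid cube into $\mathcal{F}_N$ you need simultaneously $Ns\le\rho-\sqrt n\,s$ and $\tau\rho\le N\sqrt n\,s$, i.e.\ $\tau(N+\sqrt n)\le N\sqrt n$. This fails for $N=1$ in every dimension and for $n=1$ for every $N$, no matter how small $\tau-1$ is; so the patch you anticipate (refining into thinner sub-annuli) does not close the gap, while the lemma is asserted for every $N\in\mathbb{N}$.

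The missing idea --- and the paper's actual proof --- is to use concentric \emph{cubic} annuli instead of round ones. With $\gamma=\frac{N}{N+1}$, the annulus $\gamma^kQ\setminus\gamma^{k+1}Q$ is exactly the union of $2^n\big((N+1)^n-N^n\big)$ grid cubes of side $\frac{1}{2N}\,\ell_{\gamma^{k+1}Q}$, and each such cube $L$ automatically belongs to $\mathcal{F}_N$: it lies outside the inner cube but touches its boundary, so $\frac{\ell}{2}\le\mathrm{dist}(L,c_Q)\le\frac{\sqrt n}{2}\ell$ with $\ell:=\ell_{\gamma^{k+1}Q}$, and these two bounds are precisely $\frac{N}{\sqrt n}\,\mathrm{diam}\,L$ and $N\,\mathrm{diam}\,L$. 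With this replacement your summation argument goes through verbatim for all $n$ and all $N$, with constant depending only on $N$, $n$, $\lambda$. (Alternatively one could keep round shells but anchor at each point a cube pointing away from $c_Q$, so that its distance to $c_Q$ is attained at that point, and then run a bounded-overlap selection; this can be made to work, but it is substantially more bookkeeping than the exact cubic-annulus tiling.)
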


\begin{proof} Given a cube $P\in {\mathcal Q}$ and $r>1$, consider the cubic annulus $rP\setminus P$.
Note that $|rP\setminus P|=(r^n-1)|P|$. Let $L$ be a cube of the maximal diameter such that $L\subset rP\setminus P$. Then
$|L|=\Big(\frac{r-1}{2}\Big)^n|P|$ and
$$\frac{1}{{\sqrt n}(r-1)}\text{diam}\,L=\frac{1}{2}\ell_P\le \text{dist}(L,c_P)\le \frac{1}{2}\text{diam}\,P=\frac{1}{r-1}\text{diam}\,L.$$

Taking here $r=1+\frac{1}{N}$, where $N\in {\mathbb N}$, we obtain that $(1+1/N)P\setminus P$ can be covered by $2^n((N+1)^n-N^n)$ pairwise disjoint cubes $L_j$
such that $|L_j|=|P|/(2N)^n$ and
\begin{equation}\label{Lj}
\frac{N}{\sqrt n}\text{diam}\,L_j\le \text{dist}(L_j,c_P)\le N\text{diam}\,L_j.
\end{equation}

We now apply this argument to the cubes $\ga^{k+1}Q$, where $\ga:=\frac{N}{N+1}$ and $k\in {\mathbb Z}_+$. We obtain that $\ga^kQ\setminus \ga^{k+1}Q=\cup_{j=1}^{2^n((N+1)^n-N^n)}L_{j,k}$,
where the cubes $L_{j,k}$ satisfy the same estimates as the cubes $L_j$ in (\ref{Lj}).
Therefore,
\begin{eqnarray*}
&&\frac{1}{|Q|^{\la}}\int_Q|f|^pw=\sum_{k=0}^{\infty}\frac{1}{|Q|^{\la}}\int_{\ga^kQ\setminus \ga^{k+1}Q}|f|^pw\\
&&=\frac{1}{(2(N+1))^{\la n}}\sum_{k=0}^{\infty}\Big(\frac{N}{N+1}\Big)^{k\la n}\sum_{j=1}^{2^n((N+1)^n-N^n)}\frac{1}{|L_{j,k}|^{\la}}\int_{L_{j,k}}|f|^pw\\
&&\lesssim \sup_{R\in {\mathcal F}_N}\frac{1}{|R|^{\la}}\int_R|f|^pw,
\end{eqnarray*}
which completes the proof.
\end{proof}

\begin{proof}[Proof of Lemma \ref{connect}]
First, it is easy to see that for every $0<r_1<r_2<\infty$,
\begin{equation}\label{ets}
\|f\|_{M_{\la, {\mathcal W}_{r_1,r_2}}^p(w)}\lesssim \|f\|_{M_{\la,{\mathcal F}}^p(w)}.
\end{equation}
Indeed, suppose that $Q\in {\mathcal W}_{r_1,r_2}$. This means that there exists $x_j\in \Lambda$ such that
$$r_1\text{diam}\,Q\le \text{dist}(Q,x_j)\le r_2\text{diam}\,Q.$$
From this, there exists a cube $Q'\supset Q$ centered at $x_j$ and such that $|Q'|\lesssim |Q|$. Hence,
$$\frac{1}{|Q|^{\la}}\int_Q|f|^pw\lesssim \frac{1}{|Q'|^{\la}}\int_{Q'}|f|^pw,$$
which proves (\ref{ets}).

Let us show now that there exist $0<r_1<r_2<\infty$ such that
\begin{equation}\label{md}
\|f\|_{M_{\la,{\mathcal F}}^p(w)}\lesssim \|f\|_{M_{\la, {\mathcal W}_{r_1,r_2}}^p(w)}.
\end{equation}

Without loss of generality we can assume that $0\in \Lambda$ (if $0\not\in \Lambda$ the proof is essentially the same).

For every $x_j\in \Lambda, x_j\not=0$ denote by $Q_{\nu, x_j}$ the cube centered at $x_j$ with $\text{diam}\,Q_{\nu,x_j}=\frac{1}{\nu}|x_j|$.
The key property of these cubes is that
\begin{equation}\label{kp}
R\subset Q_{\nu, x_j}\Rightarrow \text{dist}(R,\O)=\text{dist}(R,x_j).
\end{equation}
Indeed, on the one hand,
$$\text{dist}(R,x_j)\le \frac{1}{2}\text{diam}\,Q_{\nu,x_j}=\frac{1}{2\nu}|x_j|.$$
On the other hand, by (\ref{rcond}), for any $x_i\in {\Lambda}$ such that $x_i\not=x_j$,
\begin{eqnarray*}
\text{dist}(R,x_i)\ge \text{dist}(Q_{\nu,x_j},x_i)&\ge& |x_i-x_j|-\frac{1}{2}\text{diam}\,Q_{\nu,x_j}\\
&\ge& \frac{1}{\nu}|x_j|-\frac{1}{2\nu}|x_j|=\frac{1}{2\nu}|x_j|,
\end{eqnarray*}
which proves (\ref{kp}).

Let $N\in {\mathbb N}$ which will be specified later. Consider the families
$$W_j:=\{R\subset Q_{\nu,x_j}:\frac{N}{\sqrt n}\,{\rm{diam}}\,R\le {\rm{dist}}(R,x_j)\le N{\rm{diam}}\,R\}.$$
Then, by (\ref{kp}),
\begin{equation}\label{emb}
W_j\subset {\mathcal W}_{\frac{N}{\sqrt n}, N}.
\end{equation}
Also denote
$$W:=\{R\in {\mathcal Q}:\frac{N}{\sqrt n}\,{\rm{diam}}\,R\le {\rm{dist}}(R,0)\le N{\rm{diam}}\,R\}.$$

Let $Q\in {\mathcal F}$ be an arbitrary cube. If $Q$ is centered at the origin, then, by Lemma \ref{redw},
\begin{equation}\label{zero}
\frac{1}{|Q|^{\la}}\int_Q|f|^pw\lesssim \|f\|^p_{M^p_{\la, W}(w)}.
\end{equation}
Suppose that $Q$ is centered at $x_j\not=0$. If $Q\supset Q_{\nu, x_j}$, then there exists a cube $\widetilde Q\supset Q$ centered at the origin and such that $|\widetilde Q|\lesssim |Q|$. In this case we again apply (\ref{zero}).
If $Q\subset Q_{\nu,x_j}$, then, by Lemma \ref{redw} and by (\ref{emb}),
$$
\frac{1}{|Q|^{\la}}\int_Q|f|^pw\lesssim \|f\|^p_{M^p_{\la, W_j}(w)}\le \|f\|^p_{M^p_{\la, {\mathcal W}_{\frac{N}{\sqrt n}, N}}(w)}.
$$
Therefore,
\begin{equation}\label{unes}
\|f\|_{M_{\la,{\mathcal F}}^p(w)}\lesssim \max\Big(\|f\|_{M^p_{\la, W}(w)}, \|f\|_{M^p_{\la, {\mathcal W}_{\frac{N}{\sqrt n}, N}}(w)}\Big).
\end{equation}

It remains to estimate $\|f\|_{M^p_{\la, W}(w)}$. Fix a cube $Q\in W$. There exists $x_j\in \Lambda$ such that
$$\text{dist}(Q,\O)=\text{dist}(Q,x_j).$$
If $x_j=0$, then $Q\in {\mathcal W}_{\frac{N}{\sqrt n}, N}$.

Suppose that $x_j\not=0$. Let $0<\e<1$, which will be specified later. Assume that
$\text{dist}(Q,x_j)\ge \e\,\text{dist}(Q,0)$. In this case $Q\in {\mathcal W}_{\frac{\e N}{\sqrt n}, N}$.
Observe that the cases considered so far yield
\begin{equation}\label{conssf}
\frac{1}{|Q|^{\la}}\int_Q|f|^pw\le \|f\|^p_{M^p_{\la, {\mathcal W}_{\frac{\e N}{\sqrt n}, N}}(w)}.
\end{equation}

It remains to consider the case where $\text{dist}(Q,x_j)<\e\,\text{dist}(Q,0)$.
Then there exists $\xi\in Q$ such that
$$|\xi-x_j|<\e\,\text{dist}(Q,0)<\e N\text{diam}\,Q.$$
On the other hand,
$$\frac{N}{\sqrt n}\text{diam}\,Q\le \text{dist}(Q,0)\le |\xi|.$$
Therefore,
\begin{equation}\label{lest}
N\Big(\frac{1}{\sqrt n}-\e\Big)\text{diam}\,Q\le |\xi|-|\xi-x_j|\le |x_j|.
\end{equation}


Let $\widetilde Q$ be the minimal cube centered at $x_j$ which contains $Q$. Then
\begin{equation}\label{ffes}
\text{diam}\,\widetilde Q\le 2(\text{diam}\,Q+\text{dist}(Q,x_j))\le 2(1+\e N)\text{diam}\,Q.
\end{equation}
From this, by (\ref{lest}),
$$\text{diam}\,\widetilde Q\le \frac{2{\sqrt n}(1+\e N)}{N(1-\e{\sqrt n})}|x_j|.$$
Choose now $\e$ and $N$ in such a way to have
\begin{equation}\label{equa}
\frac{2{\sqrt n}(1+\e N)}{N(1-\e{\sqrt n})}=\frac{1}{\nu}.
\end{equation}
Then, applying (\ref{ffes}), Lemma \ref{redw} and (\ref{emb}), we obtain
$$
\frac{1}{|Q|^{\la}}\int_Q|f|^pw\lesssim \frac{1}{|\widetilde Q|^{\la}}\int_{\widetilde Q}|f|^pw\lesssim \|f\|^p_{M^p_{\la,W_j}(w)}\le
\|f\|^p_{M^p_{\la, {\mathcal W}_{\frac{N}{\sqrt n}, N}}(w)},
$$
which, combined with (\ref{conssf}), yields
$$
\|f\|_{M^p_{\la, W}(w)}\lesssim \|f\|_{M^p_{\la,{\mathcal W}_{\frac{\e N}{\sqrt n}, N}}(w)},
$$
where $0<\e<1$ and $N\in {\mathbb N}$ satisfy (\ref{equa}). This, combined with (\ref{unes}), completes the proof.
\end{proof}

\end{document}